\newtheorem{theorem}{Theorem}
\newtheorem{definition}[theorem]{Definition}
\newtheorem{lemma}[theorem]{Lemma}
\newenvironment{proof}[1][Proof]{\textbf{#1.} }{\ \rule{0.5em}{0.5em}}
\newcommand{\refeqn}[1]{(\ref{#1})}
\newcommand{\virgolette}[1]{``#1''}
\newcommand{\matr}[0]{\operatorname{Mat}}
\newcommand{\var}[0]{\operatorname{var}}
\begin{document}

\title{{\bf A de Finetti-type theorem for random-rotation-invariant continuous semimartingales}}
\author{ Francesco C. De Vecchi\thanks{Dipartimento di Matematica, Universit\`a degli Studi di Milano, via Saldini 50, Milano, Italy, \emph{email: francesco.devecchi@unimi.it, francesco.devecchi.fdv@gmail.com}}}
\date{}

\maketitle

\begin{abstract}
We provide a characterization of continuous semimartingales whose law is invariant with respect to predictable random rotations. In particular we prove that all such semimartingales are obtained by integrating a predictable process with respect to an independent $n$ dimensional Brownian motion.
\end{abstract}

\bigskip

\noindent \textbf{Keywords}: Invariant stochastic processes, Random rotations, Continuous semimartingales \\
\textbf{MSC numbers}: 60F17, 60G44, 60G46\\

\section{Introduction}\label{section_introduction}

The problem of characterizing random objects invariant with respect to some group of (deterministic or random) transformations was faced for the first time by de Finetti for describing the form of a sequence of infinite random variables invariant with respect to finite permutations (see \cite{deFinetti1937}). This result has been generalized in many ways, for example considering different settings (continuous time processes, non-commutative probability etc.) or new types of transformations (time translations, rotatability predictable transformations for processes etc.) so that this topic is now a classical research field in probability (see, e.g., \cite{Aldous1985,Kallenberg2005} for some reviews on the subject).\\

\noindent In this paper we characterize the continuous semimartingales invariant with respect to predictable random rotations. Denoting by
$\mathcal{F}^Z_t$ the natural filtration generated by the semimartingale $Z$ and by $O(n)$ the group of $n \times n$ orthogonal matrices and
using Einstein notation, we introduce the following definition.

\begin{definition}\label{definition_invariant}
Let $Z$ be a semimartingale taking values in $\mathbb{R}^n$. We say that $Z$ is invariant with respect to (predictable) random rotations if, for any process $B$ predictable with respect to $\mathcal{F}^Z_t$ and  taking values in $O(n)$, we have that the $\mathbb{R}^n$ semimartingale $Z'$, given by
$$Z'^i_t=\int_0^t{B^i_{j,s}dZ^j_s},$$
has the same law of $Z$.
\end{definition}

\noindent The prototype of a semimartingale invariant with respect to random rotations is the $n$ dimensional Brownian motion. Indeed this kind of invariance is an easy consequence of L\'evy characterization of Brownian motion (see \cite{DMU1}). Nevertheless this property is not peculiar of Brownian motion but is shared by many other semimartingales, such as homogeneous $\alpha$-stable L\'evy processes, Hermetian random matrices and other Markovian and non-Markovian semimartingales (see \cite{Albeverio2017}). \\
This invariance property is useful for explaining the relationship between Brownian motion and Riemannian geometry (for example the well-known relationship between $\mathbb{R}^n$ Brownian motion and the stochastic development of Brownian motion on a Riemannian manifold, see \cite{Emery,Elworthy1982,Elworthy2010}) or more generally for studying  L\'evy processes taking values in Riemannian manifolds (see \cite{Applebaum2000}). Furthermore the above invariance with respect to random rotations is a particular case of \emph{gauge symmetry} introduced in \cite{Albeverio2017} for extending the Lie symmetry analysis from the deterministic to the general stochastic setting (see also \cite{DMU2,DMU1,Gaeta2017} for the particular case of Brownian motion).\\
Although the invariance property stated in Definition \ref{definition_invariant} has many similarities with the invariance
 with respect to rotations already studied in the literature, there is a fundamental difference. Indeed, in the traditional theorems of rotatability of stochastic processes, the rotations usually act both on the space variables of $\mathbb{R}^n$ processes and on the time variable $t$ (see, e.g., \cite{Kallenberg2005}). For example, a classical result of this form is that, if a real process $X$ is such that for any $h >0$ the infinite sequence of random variables $\{X_{nh}\}_{n \in \mathbb{N}}$ is invariant with respect to finite deterministic rotations, then there exists a random variable $\sigma$ and an independent Brownian motion $W_t$ such that $X_t=\sigma W_t$. In Definition \ref{definition_invariant} the action of the rotations group is only on the space variable and the previous result does not hold. Nevertheless we are able to prove a generalization of this result for continuous semimartingales, which is a de Finetti type representation theorem.

\begin{theorem}\label{theorem_main}
Under suitable hypotheses on the continuous semimartingale $Z$ taking values in $\mathbb{R}^n$ (more precisely hypotheses {\bf A}, {\bf A1} and {\bf B} below) if $Z$ is invariant with respect to random rotations (according to Definition \ref{definition_invariant}) then there exists an $n$ dimensional Brownian motion $W$ and a predictable process $f_t$, independent of $W$, and both adapted with respect to $\mathcal{F}^Z_t$, such that $\int_0^t{f^2_sds} < +\infty$ almost surely and
$$Z^i_t=\int_0^t{f_sdW^i_s}.$$
\end{theorem}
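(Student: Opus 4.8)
The plan is to decompose $Z$ into its martingale and bounded-variation parts, show the bounded-variation part must vanish, and show the quadratic covariation matrix of the martingale part must be a scalar multiple of the identity; from this, a time-change / L\'evy-characterization argument produces the Brownian motion $W$ and the scalar integrand $f$. Concretely, write the continuous semimartingale as $Z^i_t = M^i_t + A^i_t$ with $M$ a continuous local martingale and $A$ of finite variation, and let $C^{ij}_t = \langle M^i, M^j\rangle_t$ be the (symmetric, nonnegative) matrix-valued quadratic covariation. Under a predictable rotation $B_t \in O(n)$, the rotated semimartingale $Z'$ has martingale part $\int_0^\cdot B_s\, dM_s$ with covariation $\int_0^\cdot B_s\, dC_s\, B_s^{\mathrm{T}}$, and bounded-variation part $\int_0^\cdot B_s\, dA_s$. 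Invariance in law forces these transformed characteristics to have the same law as $(C, A)$ for every predictable $O(n)$-valued $B$.

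The first key step is to exploit the freedom in choosing $B$ pointwise. Since $C_t$ is symmetric nonnegative, at (almost) every $(\omega, t)$ one can predictably diagonalize the increment $dC_t$ (measurably selecting eigenvectors); feeding in rotations that permute or reflect eigendirections, and combining with the invariance of the law, should force the ``$dC$-increment'' to be rotation-invariant as a bilinear form, i.e. $dC_t = f^2_t\, dt'$ for a single scalar density against some reference increasing process, and moreover that this reference clock is (up to the law) deterministic-like enough. More carefully: if $dC_t$ had two distinct eigenvalues on a set of positive measure, one could build a predictable rotation acting nontrivially only there and derive that $Z'$ has strictly different quadratic-variation law from $Z$ — because the scalar quadratic variation $\langle Z'^i \rangle$ for a fixed $i$ would change while, say, $\sum_i \langle Z^i\rangle = \mathrm{tr}\, C$ is rotation-invariant; a comparison of the laws of individual coordinates' quadratic variations against the trace yields the contradiction. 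Hence $C_t = \int_0^t f^2_s\, ds$ for a predictable scalar $f$ (absolute continuity in $t$ should itself follow by testing against rotations that are ``fast'' relative to any singular part, or can be imposed as part of hypothesis {\bf A}/{\bf A1}). A parallel and in fact simpler argument kills $A$: replacing $B$ by $-B$ (allowed since $-I \in O(n)$, or more locally by a reflection) sends $A \mapsto -A$ while fixing the law of $C$, and an invariant law under $A \mapsto -A$ combined with the rigidity already obtained for $M$ (e.g. via the Kunita--Watanabe / Doob--Meyer structure, or by noting the drift would break the law of some linear functional) forces $A \equiv 0$. I expect that the precise hypotheses {\bf A}, {\bf A1}, {\bf B} are exactly what is needed to make these pointwise-selection and absolute-continuity arguments rigorous (measurable selection of eigenvectors, integrability of $f$, perhaps a non-degeneracy or nowhere-vanishing condition).

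With $Z^i_t = \int_0^t f_s\, dW'^i_s$ for some continuous local martingale $W'$ with $\langle W'^i, W'^j\rangle_t = \delta^{ij}\int_0^t \mathbf{1}_{\{f_s \neq 0\}}\, ds + (\text{something on }\{f=0\})$ — handled by enlarging the probability space with an auxiliary Brownian motion on $\{f_s = 0\}$ — the classical time-change/L\'evy argument gives that $W'$ (after this completion) is an $n$-dimensional Brownian motion, and $Z^i_t = \int_0^t f_s\, dW'^i_s$. It remains to obtain the \emph{independence} of $W'$ and $f$. This is the step I expect to be the main obstacle. The idea: test invariance against rotations $B$ that are measurable functions of the ``radial''/clock data carried by $f$ but chosen to rotate $W'$; since $Z'$ has the same law as $Z$ and $f$ is recoverable from $Z$ via its quadratic variation ($f^2_t = dC_t/dt$), the conditional law of the directional part $W'$ given $f$ must be invariant under \emph{arbitrary} predictable rotations, and the only such law (on the space of $\mathbb{R}^n$-valued continuous martingales with the prescribed quadratic variation) is that of Brownian motion independent of the conditioning $\sigma$-algebra — this is precisely the rotatability characterization of Brownian motion applied conditionally on $\mathcal{F}$ generated by $f$. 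Making ``conditionally'' precise (regular conditional probabilities, and checking that the rotation family is rich enough fibrewise) is the delicate point; one clean route is to show that for every bounded functional $\Phi$ of $W'$ and every bounded functional $\Psi$ of $f$, $\mathbb{E}[\Phi(W')\Psi(f)] = \mathbb{E}[\Phi(W')]\,\mathbb{E}[\Psi(f)]$ by inserting a suitable averaging over rotations (e.g. Haar-random or a spanning family of predictable rotations that are $\sigma(f)$-measurable) and using the invariance hypothesis to ``rotate away'' the dependence, exactly as in the proof that a rotatable Gaussian sequence has i.i.d. coordinates. Once independence is established, relabel $W' $ as $W$ and the theorem follows, with $\int_0^t f_s^2\, ds = \mathrm{tr}\, C_t < \infty$ a.s.\ automatic from $Z$ being a semimartingale.
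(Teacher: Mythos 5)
Your first half (kill the drift, force the quadratic covariation to be $F_t\delta^{ij}$) is in the spirit of the paper's first lemma, but your drift argument has a flaw: applying $B=-I$ (or a local reflection) only shows that the law of the finite-variation part is symmetric under a sign change, and a symmetric law is not the zero law. The paper's proof instead uses hypothesis {\bf B} to pick a predictable rotation $B_t$ with $(B_t\cdot\tilde b_t)^i=0$, so that $Z'^i$ is a local martingale with respect to $\mathcal{F}^Z_t$, hence also with respect to its own filtration $\mathcal{F}^{Z'}_t$; its first characteristic therefore vanishes, contradicting equality in law with $Z$ unless $b\equiv 0$. Something of this kind (comparing characteristics, not just symmetry of the law) is needed.

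The genuine gap is the independence step, which you yourself flag as the main obstacle. Your plan is to condition on $f$ and invoke a ``conditional rotatability characterization'' of Brownian motion, using rotations that are $\sigma(f)$-measurable or Haar-averaged. But the hypothesis only gives invariance under rotations predictable with respect to $\mathcal{F}^Z_t$, and once you condition on the whole path of $f$ the rotations available fibrewise are exogenous: they cannot depend on $W$ itself. Invariance of the conditional law of $W$ under such rotations does not identify it as Wiener measure independent of $\sigma(f)$; to run a conditional L\'evy/rotatability argument you would need $W$ to remain a martingale with bracket $t\,\delta^{ij}$ in the filtration initially enlarged by $\sigma(f)$, which is essentially the statement to be proved. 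The dependencies that must be excluded are exactly those of $f$ on rotation-covariant features of $W$'s own path (radial functionals, exit times), and excluding them requires rotations built from $W$. This is the content of the paper's Lemma \ref{lemma_main2}: take the exit times $\tau^h_k$ of $W$ from spheres of radius $h$, use rotations constant on each $(\tau^h_{k-1},\tau^h_k]$, prove these maps are a.s. invertible and leave the $\tau^h_k$ invariant, conclude that the conditional expectation of any invariant random variable given the increments $W_{\tau^h_k}-W_{\tau^h_{k-1}}$ is rotation-invariant in each increment and hence constant (each increment has norm exactly $h$), and finally recover $W$ as $h\to 0$ through the law of large numbers for the i.i.d.\ exit durations. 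Your sketch contains no substitute for this construction, so the central independence claim remains unproven. A secondary point: enlarging the space with an auxiliary Brownian motion on $\{f=0\}$ would yield a $W$ not adapted to $\mathcal{F}^Z_t$, whereas adaptedness is part of the statement; under hypothesis {\bf A1} the enlargement is unnecessary because $\tilde A_t$ is a.e.\ nondegenerate.
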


\noindent In order to  describe more precisely the hypotheses of Theorem \ref{theorem_main} we start by recalling the notion of characteristics of a $\mathbb{R}^n$ semimartingale (see \cite{Jacod2003}): the two continuous predictable processes of bounded variation $b_t \in \mathbb{R}^n$ and $A_t \in \matr(n,n)$ are the characteristics of the continuous semimartingale $Z$ with respect to its natural filtration $\mathcal{F}^Z_t$ if
$Z^i_t-b_t$ is a local ($\mathcal{F}^Z_t$) martingale and $A^{ij}_t=[Z^i,Z^j]_t$. It is clear that $A_t$ is a symmetric semidefinite positive matrix increasing with respect to $t$, i.e. $A_t-A_s$ is semidefinite positive whenever $t \geq s$.\\
We introduce the following hypothesis on $Z$
\begin{itemize}
\item hypothesis {\bf A}: $A_t$ is almost surely absolutely continuous, i.e. there exists a predictable process $\tilde{A}_{t}$ taking value in the set of symmetric semidefinite positive matrices almost surely finite with respect the measure $\mathbb{P} \otimes dt$ such that
$$A^{ij}_t=\int_0^t{\tilde{A}^{ij}_sds};$$
\item hypothesis {\bf A1}: the process $\tilde{A}^{ij}_t$ of the hypothesis {\bf A} is such that $\tilde{A}^{ij}_t$ is a symmetric \emph{definite positive} (and not only semidefinite positive) matrix almost surely with respect to $\mathbb{P} \otimes dt$;
\item hypothesis {\bf B}: $b_t$ is almost surely absolutely continuous, i.e. there exists a predictable process $\tilde{b}_t$ such that $b^i_t=b_0+\int_0^t{\tilde{b}^i_sds}$.\\
\end{itemize}

\noindent We briefly discuss the relationship between the hypotheses of Theorem \ref{theorem_main}, namely the invariance of $Z$ with respect to random rotations, the request that $Z$ is a continuous semimartingale and the above hypotheses {\bf A}, {\bf A1} and {\bf B}.\\
Obviously, when the thesis of Theorem \ref{theorem_main} holds, $Z$ is a continuous semimartingale and hypotheses {\bf A} and {\bf B} are satisfied. It is well known (see \cite{Albeverio2017}) that there are non-continuous semimartingales which are invariant with respect to random rotations, so the continuity of $Z$ is a necessary hypothesis. Indeed there is a deeper reason for the continuity hypothesis: since the discontinuous processes have not the martingales representation property if we use only It\^o integration of the form $\int_0^t{H_sdZ_s}$, we think that a trivial equivalent of Theorem \ref{theorem_main}, replacing the integral with respect to Brownian motion with some integral with respect to some L\'evy process, does not hold in the discontinuous case. Finally the continuity of the process $Z$, and so of the Brownian motion, is a key ingredient in proving Lemma \ref{lemma_main2} below.\\
We remark that Hypothesis {\bf A1} is not necessary if we change the thesis, not requesting that the Brownian motion $W$ is measurable with respect to the natural filtration $\mathcal{F}^Z_t$ of $Z$. Furthermore if we require an invariance property with respect to random rotations stronger than Definition \ref{definition_invariant} we are able to modify the proof of Theorem \ref{theorem_main} without using hypothesis {\bf A1} and suitably enlarging the probability space where $Z$ is defined. This stronger invariance needs the law of $Z$ to be invariant with respect to any random rotation predictable with respect to any filtration $\mathcal{F}_t$ generated by $\mathcal{F}^Z_t$ and by an other filtration $\mathcal{G}_t$ independent of $\mathcal{F}^Z_t$. Until now, all the semimartingales for which we are able to prove the invariance according to Definition \ref{definition_invariant} satisfy this stronger notion of invariance. For this reason Theorem \ref{theorem_main} might hold without hypothesis {\bf A1} and requesting (only) the invariance with respect to Definition \ref{definition_invariant}.\\

\noindent Finally under suitable hypotheses on the process $f_t$ it is possible to prove that, if $Z$ satisfies the thesis of Theorem \ref{theorem_main}, then $Z$ is also invariant with respect to random rotations. For example if $f_t$ is measurable with respect to a filtration $\mathcal{F}^H_t$ generated by a $\mathbb{R}^h$  semimartingale  $H$ whose law is uniquely characterized by its characteristics and independent of $W$, we are able to use Theorem 3.18 of \cite{Albeverio2017} proving that $\int_0^t{f_tdW_t}$ is invariant with respect to random rotations.\\

\noindent The paper is organized into two sections. In Section \ref{section_preliminaries} we introduce some notations, concepts and results useful in the proof of Theorem \ref{theorem_main}. Section \ref{section_proof} contains some lemmas and the proof of our main result.

\section{Notations and preliminaries}\label{section_preliminaries}

In this section in order to fix the setting, we provide some notations and results about the random rotations invariance of semimartingales.\\
We consider the probability space $\Omega$ given by the Fr\'echet space $C^0(\mathbb{R}_+,\mathbb{R}^n)$ with the usual $\sigma$-algebra $\tilde{\mathcal{F}}$ of the Borel sets.  In order to have a filtration on $\Omega$ we fix a probability measure $\mathbb{P}$ such that the coordinate process $\omega(t)=Z_t$, where $\omega \in \Omega$, is a semimartingale.
In the following when we consider $\sigma$-algebra generated by some random variables or some processes we always mean the usual completed $\sigma$-algebra generated by these random variables or by these processes.\\
Let $B_t$ be a predictable process with respect to the filtration $\mathcal{F}^Z_t$ taking values in $O(n)$ and define the process
\begin{equation}\label{equation_1}
Z'^i_t=\int_0^t{B^i_{j,s}dZ^j_s}.
\end{equation}
The process $Z'_t$ defines a measurable map $\Lambda^B:\Omega \rightarrow \Omega'$ (where $\Omega'=C^0(\mathbb{R}_+,\mathbb{R}^n)$) in the following way
$$\Lambda^B(\omega)(t)=Z'_t(\omega).$$
We denote by $\mathbb{P}'=\Lambda^B_*(\mathbb{P})$ the pushforward of the measure $\mathbb{P}$ with respect to $\Lambda^B$. The canonical process $\omega'(t)$ (where $\omega' \in \Omega'$) with respect to the probability measure $\mathbb{P}'$ has exactly the same law of $Z'_t$. For this reason, in the following, with a slight abuse of notation, we identify the canonical process $\omega'(t)$ with the process $Z'_t$.\\
If $Z$ is invariant with respect to random rotations, for any $B$ as above we have $\mathbb{P}'=\mathbb{P}$. We say that $\Lambda^B$ is almost surely invertible if there exists a predictable measurable map $\Lambda'^B:\Omega' \rightarrow \Omega$, such that $\Lambda^B \circ \Lambda'^B=id_{\Omega'}$ almost surely with respect to $\mathbb{P}'$ and $\Lambda'^B \circ \Lambda^B=id_{\Omega}$ almost surely with respect to $\mathbb{P}$.\\
Using the notion of characteristics of a semimartingale introduced in Section \ref{section_introduction} we state the following theorem.

\begin{theorem}\label{theorem_preliminary}
Let $(b_t,A_t)$ be the characteristics of a continuous semimartingale $Z$. If $Z$ is invariant with respect to random rotations (according to Definition \ref{definition_invariant}) then
\begin{eqnarray}
b^{i}_t(\omega)&=&\int_0^t{B^i_{k,s}(\Lambda'^B(\omega))db^{k}_s(\Lambda'^B(\omega))},\\
A^{ij}_t(\omega)&=&\int_0^t{B^i_{k,s}(\Lambda'^B(\omega))B^j_{\ell,s}(\Lambda'^B(\omega))dA^{k\ell}_s(\Lambda'^B(\omega))},
\end{eqnarray}
almost surely with respect to the measure $\mathbb{P}$.
\end{theorem}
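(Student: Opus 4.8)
The plan is to exploit the fact that if $\Lambda^B$ is almost surely invertible then $Z = \Lambda^{B'}(\omega')$ with $\omega' = Z'$, i.e.\ the original process is recovered from the rotated one by integrating against the inverse rotation, which is predictable with respect to $\mathcal{F}^{Z'}_t$. More concretely, since $B$ is $\mathcal{F}^Z_t$-predictable, composing with $\Lambda'^B$ gives a process $\tilde B_t(\omega') := B_t(\Lambda'^B(\omega'))$ that is $\mathcal{F}^{Z'}_t$-predictable (here one uses that $\Lambda'^B$ maps the $Z'$-filtration into the $Z$-filtration), and the relation $Z^i_t = \int_0^t (\tilde B^{-1})^i_{j,s}\,dZ'^j_s$ holds $\mathbb{P}'$-a.s. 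The characteristics transform covariantly under stochastic integration: if $(b,A)$ are the characteristics of $Z$ with respect to $\mathcal{F}^Z_t$, then the characteristics of $Z'^i_t = \int_0^t B^i_{j,s}\,dZ^j_s$ with respect to $\mathcal{F}^{Z'}_t$ are precisely $b'^i_t = \int_0^t B^i_{k,s}\,db^k_s$ and $A'^{ij}_t = \int_0^t B^i_{k,s}B^j_{\ell,s}\,dA^{k\ell}_s$ — this is the standard behaviour of drift and bracket under It\^o integration against an orthogonal (hence bounded) predictable integrand, together with the fact that the natural filtration of $Z'$ is contained in that of $Z$, so that the $\mathcal{F}^{Z'}_t$-characteristics agree with the $\mathcal{F}^Z_t$-ones by the tower/projection property of compensators.

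Next I would invoke the hypothesis that $Z$ is invariant with respect to random rotations: this gives $\mathbb{P}' = \mathbb{P}$, so the canonical process on $\Omega'$ under $\mathbb{P}'$ has the same law as $Z$ under $\mathbb{P}$, and in particular has characteristics given by the \emph{same} functionals $(b,A)$ of its own path. Thus on $\Omega'$ we have two descriptions of the characteristics of $\omega'$: on one hand $(b_t(\omega'),A_t(\omega'))$ by invariance, on the other hand $(b'_t,A'_t)$ by the transformation rule above. Equating them and then pulling everything back along $\Lambda'^B$ (using $\Lambda^B\circ\Lambda'^B = id$ $\mathbb{P}'$-a.s., and that pushforward by $\Lambda'^B$ of $\mathbb{P}'$ is $\mathbb{P}$) yields, $\mathbb{P}$-a.s.,
\begin{eqnarray*}
b^i_t(\omega) &=& \int_0^t B^i_{k,s}(\Lambda'^B(\omega))\,db^k_s(\Lambda'^B(\omega)),\\
A^{ij}_t(\omega) &=& \int_0^t B^i_{k,s}(\Lambda'^B(\omega))B^j_{\ell,s}(\Lambda'^B(\omega))\,dA^{k\ell}_s(\Lambda'^B(\omega)),
\end{eqnarray*}
which is exactly the claim.

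The main obstacle I anticipate is not the algebra of the characteristics but the measure-theoretic bookkeeping around $\Lambda'^B$: one must be careful that the almost-sure inverse $\Lambda'^B$ exists (or reduce to that case, e.g.\ by approximating a general $O(n)$-valued predictable $B$ by ones for which invertibility is transparent, or by noting $B^{-1}$ is again $O(n)$-valued and predictable), that composition with $\Lambda'^B$ genuinely preserves predictability and the relevant filtrations, and that the "change of variables" identity $A^{ij}_t(\Lambda^B(\omega)) = \int_0^t B^i_{k,s}B^j_{\ell,s}\,dA^{k\ell}_s$ — proved by the standard $[Z'^i,Z'^j]_t = \int_0^t B^i_{k,s}B^j_{\ell,s}\,d[Z^k,Z^\ell]_s$ for It\^o integrals, and the analogous statement for the drift compensator — transfers correctly under the pushforward. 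Once the identification of the $\mathcal{F}^{Z'}_t$-characteristics of $Z'$ with these explicit integrals is in hand, combining it with $\mathbb{P}'=\mathbb{P}$ and pulling back is routine.
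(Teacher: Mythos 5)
Your proposal is essentially correct, but note that the paper does not argue this statement at all: its proof is a one-line citation of Theorem 3.8 of \cite{Albeverio2017}, a general result on gauge symmetries of (possibly discontinuous) semimartingales, of which the present claim is the special case of $O(n)$-valued predictable transformations of a continuous semimartingale. What you propose is a direct proof of that special case: compute the characteristics of $Z'^i=\int_0^t B^i_{j,s}\,dZ^j_s$ as $\bigl(\int_0^t B\,db,\ \int_0^t B\,dA\,B^{T}\bigr)$, observe that under the invariance hypothesis $\mathbb{P}'=\Lambda^B_*(\mathbb{P})=\mathbb{P}$ the canonical process on $\Omega'$ has characteristics given by the \emph{same} path functionals $(b_t(\omega'),A_t(\omega'))$, and equate; since $\Omega'=\Omega$ and $\mathbb{P}'=\mathbb{P}$, the resulting identity is exactly the one asserted. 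The citation buys generality (jumps, arbitrary invertible gauge transformations) and hides the bookkeeping; your route buys a self-contained, more transparent argument in the continuous setting. The one step you must tighten is the passage from the $\mathcal{F}^{Z}_t$-characteristics of $Z'$ to its $\mathcal{F}^{Z'}_t$-characteristics: a bare appeal to a ``tower/projection property'' is not sufficient, since in general the drift in the smaller filtration is a projection of $\int B\,db$ and may differ from it --- the paper itself flags precisely this distinction in the proof of Lemma \ref{lemma_main1}. It is rescued here by the assumption, implicit in the appearance of $\Lambda'^B$ in the statement, that $\Lambda^B$ is almost surely invertible with predictable inverse: then $B\circ\Lambda'^B$, $b\circ\Lambda'^B$ and $A\circ\Lambda'^B$ are $\mathcal{F}^{Z'}_t$-predictable, so the candidate drift $\int_0^t B_s(\Lambda'^B)\,db_s(\Lambda'^B)$ is continuous and $\mathcal{F}^{Z'}_t$-adapted, and the continuous $\mathcal{F}^{Z}_t$-local martingale $Z'-\int B\,db$, being $\mathcal{F}^{Z'}_t$-adapted, is an $\mathcal{F}^{Z'}_t$-local martingale (localize by its own hitting times, which are stopping times for both filtrations and make the stopped process bounded, then use the tower property); the quadratic covariation part needs no such care, being a pathwise limit independent of the filtration. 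With that point made explicit, your argument is complete and is essentially the mechanism behind the cited theorem.
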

\begin{proof}
This is a special case of Theorem 3.8 in \cite{Albeverio2017}.
${}\hfill$\end{proof}

In the following if $\Lambda^B$ is almost surely invertible and $K(\omega)$ is a random variable defined on $\Omega$ we define
$$\Lambda^B_*(K)(\omega')=K(\Lambda'^B(\omega')).$$
A random variable $K$ is said to be \emph{invariant with respect to the action of $\Lambda^B$} if $\Lambda^B_*(K)=K$ almost surely with respect to $\mathbb{P}$.\\

If $Z$ is a local martingale, under the hypothesis {\bf A1} it is simple to construct the Brownian motion whose existence is stated in Theorem \ref{theorem_main}. Indeed, consider the square root $\sqrt{\tilde{A}}_t$ of the matrix $\tilde{A}_t$. Since the matrix $\tilde{A}_t$ is almost surely invertible with respect to the measure $\mathbb{P} \otimes dt$, the matrix $C=(\sqrt{\tilde{A}})^{-1}$ is defined almost surely with respect to the measure $\mathbb{P} \otimes dt$. Furthermore $C$ is integrable with respect to $Z$ and the integral
$$W^i_t=\int_0^t{C^i_{k,s}dZ^k_s}$$
is a Brownian motion. Indeed $W^i$ are local martingales and
$$
\left[W^i,W^j\right]_t=\int_0^t{C^i_{k,s}C^j_{\ell,s}\tilde{A}^{k\ell}_s ds}=\delta^{ij}t,
$$
thus, by L\'evy characterization, $W$ is an $n$ dimensional Brownian motion. If $B$ is a predictable process such that $\Lambda^B$ is invertible it is simple to study the action of $\Lambda^B$ on $W^i$. In particular if $W'$ is the Brownian motion obtained with the previous procedure from $Z'$ we have that
$$\Lambda^B_*(W^i_t)=\int_0^t{\Lambda^B_*(B^{-1,i}_{j,s})dW'^j_s}.$$

\section{Proof of the main theorem}\label{section_proof}

We start by proving the following two lemmas.

\begin{lemma}\label{lemma_main1}
Under the hypothesis {\bf A} and {\bf B}, if a continuous semimartingale $Z$ with characteristics $(b,A)$ is invariant with respect to random rotations then $b_t=0$ almost surely and there exists a predictable process $F_t$ such that
$$A^{ij}_t=F_t \delta^{ij}.$$
\end{lemma}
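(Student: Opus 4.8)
The plan is to exploit Theorem~\ref{theorem_preliminary}, which gives transformation rules for the characteristics $(b,A)$ under $\Lambda^B$, and to apply it to cleverly chosen deterministic (or suitably adapted) rotations $B$. Under hypotheses {\bf A} and {\bf B} we may work with the densities $\tilde b_t$ and $\tilde A_t$, so the integral identities in Theorem~\ref{theorem_preliminary} become, after differentiating in $t$ and using that a.s. invertibility of $\Lambda^B$ lets us pull the identities back to $\Omega$ via $\Lambda^B_*$, pointwise ($\mathbb{P}\otimes dt$-a.e.) identities of the form $\tilde b_t = \Lambda^B_*(B_t)\,\Lambda^B_*(\tilde b_t)$ and $\tilde A_t = \Lambda^B_*(B_t)\,\Lambda^B_*(\tilde A_t)\,\Lambda^B_*(B_t)^{\top}$. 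The key point is that the law of $\tilde A_t$ (resp.\ $\tilde b_t$) must be invariant in an appropriate sense, and then one chooses $B$ to be the rotation that diagonalizes $\tilde A_t$ (which is predictable, since $\tilde A$ is), forcing $\tilde A_t$ to have a law invariant under conjugation by all rotations; a symmetric positive semidefinite matrix whose law is conjugation-invariant must be a scalar multiple of the identity, giving $\tilde A_t = f_t \delta^{ij}$ with $f_t = \tfrac{1}{n}\operatorname{tr}\tilde A_t$ predictable, hence $A^{ij}_t = F_t\delta^{ij}$ with $F_t = \int_0^t f_s\,ds$.

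For the drift, the strategy is similar but uses that the identity $\tilde b_t = \Lambda^B_*(B_t \tilde b_t)$ forces the (conditional) law of $\tilde b_t$ to be invariant under all rotations, and a vector in $\mathbb{R}^n$ whose law is rotation-invariant and which is at the same time forced to be "aligned" by the transformation rule can only be zero. More concretely: choosing $B_t = -\operatorname{Id}$ (a constant, hence predictable, element of $O(n)$) the transformation rule yields $\tilde b_t = -\Lambda^B_*(\tilde b_t)$; combined with the invariance of the law this should already be enough to kill $b$, but to be safe one runs the argument with the rotation that sends $\tilde b_t$ to $|\tilde b_t| e_1$ and then with a rotation fixing that direction's sign, extracting a contradiction unless $\tilde b_t = 0$ a.e.; then $b_t = b_0 + \int_0^t \tilde b_s\,ds = b_0$, and $b_0 = 0$ because $Z_0$ has a rotation-invariant law while $b$ is the predictable drift starting at $b_0$ — here one uses Definition~\ref{definition_invariant} at $t=0$ directly.

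The main obstacle I expect is making the passage from "the characteristics transform covariantly" (Theorem~\ref{theorem_preliminary}) to "the law of $\tilde A_t$ is conjugation-invariant" fully rigorous: the rotation $B$ that diagonalizes $\tilde A_t$ is itself a functional of $Z$, so $\Lambda^B_*(B_t)$ and $\Lambda^B_*(\tilde A_t)$ are entangled, and one has to argue carefully — presumably by first establishing the claim for deterministic $B$ (where $\Lambda^B_*$ acts transparently) to deduce that the finite-dimensional distributions of $\tilde A_t$ are $O(n)$-conjugation invariant, and only then invoking the spectral/eigenvector construction. A secondary technical point is the justification of differentiating the integral identities of Theorem~\ref{theorem_preliminary} in $t$ and interpreting them $\mathbb{P}\otimes dt$-a.e., which requires that both sides are absolutely continuous — this is exactly why hypotheses {\bf A} and {\bf B} are assumed, and it should follow by a Lebesgue differentiation argument applied to the predictable densities. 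Once $\tilde A_t$ is pinned down to scalar form and $\tilde b_t$ to zero, the conclusion $A^{ij}_t = F_t\delta^{ij}$, $b_t = 0$ is immediate.
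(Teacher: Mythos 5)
There is a genuine gap in the covariation part: the pivotal claim that \emph{a symmetric positive semidefinite random matrix whose law is invariant under conjugation by all rotations must be a scalar multiple of the identity} is false. Take $\tilde A = U D U^{\top}$ with $U$ Haar-distributed on $O(n)$ and $D$ a fixed diagonal matrix with distinct positive entries: its law is conjugation-invariant, yet it is never scalar. Conjugation-invariance of the law (which is what deterministic rotations give you) is therefore not enough; you need a second, separate input. The paper's route is exactly this two-step combination: first apply the \emph{predictable} diagonalizing rotation (built via a measurable-selection result, after regularizing $\tilde A_t+\epsilon I_n$ to make it strictly positive definite) to produce $Z'$ whose quadratic covariation $A'$ is a.s.\ diagonal; since quadratic covariation is a path functional determined by the law, $A\stackrel{d}{=}A'$ forces $A$ itself to be a.s.\ diagonal. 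Only then do constant rotations enter: if the diagonal entries were not a.s.\ equal, a $45$-degree rotation in the corresponding plane would give $A'$ a nonvanishing off-diagonal entry, contradicting that $A'$ must have the law of a diagonal process. Your sketch contains both ingredients (the diagonalizer and the deterministic rotations) but assembles them through the false implication, so as written the conclusion $\tilde A_t=f_t\,\mathrm{Id}$ does not follow. Note also that you route everything through Theorem~\ref{theorem_preliminary}, which requires a.s.\ invertibility of $\Lambda^B$ — something you never verify for the diagonalizing rotation; the paper's proof of this lemma avoids Theorem~\ref{theorem_preliminary} entirely by computing $[Z'^i,Z'^j]$ and the drift of $Z'$ directly from the stochastic integral.

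The drift part is also incomplete, and it misses the one subtlety that actually carries the paper's argument. As you concede, $B=-\mathrm{Id}$ only yields an oddness/symmetry property of $b$, not $b=0$. Your backup plan (rotate $\tilde b_t$ onto $|\tilde b_t|e_1$, then flip) is not carried out, and when $B$ is predictable rather than deterministic the characteristics you control for $Z'$ are those relative to the \emph{original} filtration $\mathcal{F}^Z_t$, whereas equality in law only constrains the characteristics of $Z'$ relative to its \emph{own} natural filtration $\mathcal{F}^{Z'}_t$; these need not coincide, so you cannot simply read off "the law of $\tilde b_t$ is rotation-invariant and aligned, hence zero." The paper resolves this by choosing a predictable rotation killing the $i$-th component of $\tilde b$, so that $Z'^i$ is a local martingale with respect to the larger filtration $\mathcal{F}^Z_t$, hence also with respect to $\mathcal{F}^{Z'}_t\subset\mathcal{F}^Z_t$; its own-filtration drift is then zero, contradicting $Z'\stackrel{d}{=}Z$ if $b^i\not\equiv 0$. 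Finally, the remark that $b_0=0$ "because $Z_0$ has a rotation-invariant law" is not the right justification — the drift characteristic is fixed by the decomposition of $Z$ and the issue is only whether its density vanishes, which is what the above argument establishes.
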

\begin{proof}
Under the hypothesis {\bf A} for any $\epsilon >0$ the matrix $\tilde{A}^{ij}+\epsilon I_n$ is almost surely (with respect to the measure $\mathbb{P} \otimes dt$) a symmetric strictly positive definite matrix. Using Proposition 1.8 of \cite{DaPrato1992}, there exists a predictable process $B_t$ taking values in $O(n)$ such that $B_t \cdot (\tilde{A}_t + \epsilon I_n) \cdot B^T_t$ is a diagonal matrix for any $t$ and almost surely. Thus $B_t \cdot  \tilde{A}_t \cdot B^T_t$ is a diagonal matrix for any $t$ and almost surely. By exploiting \refeqn{equation_1} this means that
$$A'^{ij}_t=[Z'^i,Z'^j]_t=\int_0^t{B^i_{k,s}B^j_{\ell,s}dA^{k\ell}_s}=\int_0^t{B^i_{k,s}B^j_{\ell,s}\tilde{A}^{k\ell}_sds},$$
is diagonal. Since $Z$ is invariant with respect to random rotations, the semimartingale $Z$ has the same law of $Z'$ given by \refeqn{equation_1}. In particular the quadratic variation matrix $A$ of $Z$ has the same law of the quadratic variation $A'$ of $Z'$. This means that $A^{ij}_t=0=A'^{ij}$ whenever $i \not = j$, i.e. $A^{ij}_t$ is diagonal.\\
On the other hand if we choose a constant process $B$ and $A^{ii}_t$ are not all almost surely equals we have that
$A'^{ij}_t=\int_0^t{B^{i}_kB^j_k\tilde{A}^{kk}_sds}$ is not almost surely identically equal to zero when $i \not = j$. This means that $A'$ has not the same law of $A$ and so $Z'$ has not the same law of $Z$. Thus $A^{ij}_t=F_t \delta^{ij}$ for some increasing, positive and absolutely continuous process $F$.\\
Suppose that $b^i_t$ is not almost surely zero: this means that $Z^i_t$ is not a local martingale with respect to the natural filtration
$\mathcal{F}^Z_t$. Under the hypothesis {\bf B} there exists a predictable random process $B_t$ such that $(B_t \cdot \tilde{b}_t)^i=0$ almost
surely. Since the characteristics of $Z'$ under the filtration $\mathcal{F}^Z_t$ (and not, in general, under the natural filtration
$\mathcal{F}^{Z'}_t$ of $Z'$) are given by $\hat{b}_t=\int_0^t{B_s \cdot \tilde{b}_s ds}$, this means that $Z'^i$ is a local martingale with
respect to the filtration $\mathcal{F}^Z_t$. On the other hand since $Z'$ is $\mathcal{F}^{Z'}_t$ adapted, and since the filtration
$\mathcal{F}^Z_t$ contains the filtration $\mathcal{F}^{Z'}_t$, $Z'^i$ is a local martingale also with respect to $\mathcal{F}^{Z'}_t$. This
means that the characteristic $b'_t$ of $Z'$ with respect to $\mathcal{F}^{Z'}_t$ is such that $b'^i_t=0$ almost surely and $Z'$ cannot have the
same law of $Z$. Thus we must have $b_t=0$ almost surely.
${}\hfill$\end{proof}\\

\begin{lemma}\label{lemma_main2}
Under the hypotheses {\bf A}, {\bf A1} and {\bf B} let $K$ be a random variable defined on $\Omega$ invariant with respect to the action $\Lambda^B$, for any $B$ such that $\Lambda^B$ is almost surely invertible. Then $K$ is independent of the Brownian motion $W$ constructed in Section \ref{section_preliminaries}.
\end{lemma}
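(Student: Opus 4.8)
The plan is to show that $W$ is a Brownian motion with respect to the augmented right-continuous filtration $\mathcal{G}_t:=\mathcal{F}^W_t\vee\sigma(K)$; then, since a $\mathcal{G}$-Brownian motion is independent of $\mathcal{G}_0\supseteq\sigma(K)$, it follows that $W$ is independent of $K$.

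First I would fix the setting. By Lemma \ref{lemma_main1} (which uses that $Z$ is invariant) together with hypothesis {\bf A1}, $Z$ is a continuous local martingale with $[Z^i,Z^j]_t=\delta^{ij}\int_0^t f_s^2\,ds$ for a predictable process $f$ with $f_s\neq 0$ almost surely with respect to $\mathbb{P}\otimes dt$; the Brownian motion of Section \ref{section_preliminaries} is $W^i_t=\int_0^t f_s^{-1}\,dZ^i_s$, and $Z^i_t=\int_0^t f_s\,dW^i_s$. If $B$ is an $\mathcal{F}^Z_t$-predictable $O(n)$-valued process with $\Lambda^B$ almost surely invertible, a short computation (using $B_t\in O(n)$) shows that the characteristics of $Z'=\Lambda^B(Z)$ are again $(0,\delta^{ij}\int_0^t f_s^2\,ds)$, so that the Brownian motion attached to $Z'$ as in Section \ref{section_preliminaries} is $W'^i_t=\int_0^t f_s^{-1}\,dZ'^i_s=\int_0^t B^i_{j,s}\,dW^j_s=:W^{B,i}_t$. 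Combining $\Lambda^B_*\mathbb{P}=\mathbb{P}$ (invariance of $Z$) with $K\circ\Lambda^B=K$ almost surely (invariance of $K$ together with $\Lambda'^B\circ\Lambda^B=\mathrm{id}$ a.s.), a change of variables yields, for all bounded measurable $\phi$ on $C^0(\mathbb{R}_+,\mathbb{R}^n)$ and all bounded measurable $g$,
$$\mathbb{E}\big[\phi(W^B)\,g(K)\big]=\mathbb{E}\big[\phi(W)\,g(K)\big];\qquad(\ast)$$
the same argument gives $(\ast)$ for every $\phi$ with $\phi(W)\in L^1$. (Each $W^{B,i}$ is already a Brownian motion by L\'evy's characterization, so the content of $(\ast)$ is that a predictable rotation preserves the \emph{joint} law of $(W,K)$.)

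Next I would check that each $W^i$ is a $\mathcal{G}$-martingale. Fix $0\le s<t$, a bounded $\mathcal{F}^W_s$-measurable random variable $V$ and a bounded measurable $g$; since the products $V\cdot g(K)$ form a multiplicative generating class for $\mathcal{G}_s$, by the monotone class theorem it suffices to show $\mathbb{E}\big[(W^i_t-W^i_s)\,V\,g(K)\big]=0$. Take the deterministic (hence $\mathcal{F}^Z_t$-predictable) process $B_r=I_n$ for $r\le s$ and $B_r=-I_n$ for $r>s$; at the level of paths $\Lambda^B$ reflects the path after time $s$ through its position at time $s$, hence coincides with its own inverse and is almost surely invertible. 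For the associated process one has $W^B_r=W_r$ for $r\le s$ — so $V(W^B)=V$ — and $W^{B,i}_t-W^{B,i}_s=-(W^i_t-W^i_s)$. Applying $(\ast)$ to $\phi(w):=(w^i_t-w^i_s)\,V(w)$ and using $\phi(W^B)=-(W^i_t-W^i_s)\,V$ gives $\mathbb{E}\big[(W^i_t-W^i_s)\,V\,g(K)\big]=-\mathbb{E}\big[(W^i_t-W^i_s)\,V\,g(K)\big]$, so this expectation is $0$. Therefore $W^i$ is a continuous $\mathcal{G}$-martingale with $[W^i,W^j]_t=\delta^{ij}t$, and L\'evy's characterization makes $W$ a $\mathcal{G}$-Brownian motion, hence independent of $\mathcal{G}_0$ and in particular of $K$.

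I expect the main obstacle to be the clean derivation of $(\ast)$: one must keep careful track of the push-forward $\Lambda^B_*$, of the identity relating the Brownian motion of $Z'=\Lambda^B(Z)$ to $W^{B,i}_t=\int_0^t B^i_{j,s}\,dW^j_s$, and of the fact that $K$ is a functional of $Z$ (not merely of $W$), so that $(\ast)$ genuinely uses \emph{both} the invariance of $Z$ and the invariance of $K$; checking that the reflection $B$ above yields an almost surely invertible $\Lambda^B$ should also be spelled out. The continuity of $Z$, and hence of $W$, is used precisely in the final appeal to L\'evy's characterization.
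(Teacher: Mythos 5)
Your proposal is correct, and it takes a genuinely different route from the paper. The paper discretizes $W$ by the exit times $\tau^h_k$ of balls of radius $h$, applies piecewise-constant random rotations between these stopping times (checking explicitly that the resulting $\Lambda^{B}$ is invertible and that the $\tau^h_k$ are invariant), shows that $\mathbb{E}[f(K)\mid\mathcal{G}^h]$, viewed as a function of the increments $\Delta^hW_k$, must be invariant under arbitrary rotations of each increment and hence depends only on the norms $\|\Delta^hW_k\|=h$, so it is constant; independence of $K$ from $W$ then follows by approximating $W_t$ almost surely with $\mathcal{G}^h$-measurable variables via the moments of the exit time ($\tau^h_{k^h}\to 1$). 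You instead enlarge the filtration at time zero, $\mathcal{G}_t=\mathcal{F}^W_t\vee\sigma(K)$, and use the single deterministic rotation \virgolette{identity up to $s$, $-I_n$ after $s$} (which is its own inverse, so $K$-invariance applies) together with the change of variables $\Lambda^B_*\mathbb{P}=\mathbb{P}$ and $K\circ\Lambda^B=K$ to get the conditional centering $\mathbb{E}[(W^i_t-W^i_s)Vg(K)]=0$, then conclude by L\'evy's characterization in the enlarged filtration. This buys a much shorter argument: no stopping-time construction, no inversion computation, no appeal to the exit-time moment estimates of \cite{Takeuchi1985}, and only reflections (rather than all of $O(n)$) are needed; the price is that the key identity $W'\circ\Lambda^B=\int_0^\cdot B_s\,dW_s$ and the derivation of $(\ast)$ must be justified carefully (in particular that the functional $f$ attached to $Z'$ pulled back along $\Lambda^B$ coincides with $f$, which for your reflection follows because quadratic variation is a pathwise functional unchanged by $B=\pm I_n$), together with the routine truncation step extending $(\ast)$ to $\phi$ with $\phi(W)\in L^1$ and the monotone class step — all manipulations of the same kind the paper itself performs, so I see no genuine gap. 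The paper's proof, in exchange, exhibits the more explicit intermediate fact that $K$ is independent of each $\mathcal{G}^h$ and stays closer in spirit to classical de Finetti/rotatability arguments.
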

\begin{proof}
If $Z$ is invariant with respect to random rotations and hypotheses {\bf A} and {\bf B} hold, by Lemma \ref{lemma_main1} $Z$ must be a local martingale with respect to the filtration $\mathcal{F}^Z_t$, and so the process $W$ defined in Section \ref{section_preliminaries} is a well defined Brownian motion.\\
We define a sequence of stopping times $\tau^h_k$ depending on the real parameter $h >0$. Setting $\tau^h_0=0$, $\tau^h_1$ is defined as follows
$$\tau^h_1=\inf \{t | \ \  \|W_t\| \geq h \},$$
where $\| \cdot \|$ is the Euclidian norm of $\mathbb{R}^n$, while the stopping times $\tau^h_k$ ($k \geq 2$) are defined by recursion as
$$\tau^h_k=\tau^h_{k-1}+\inf \{t | \ \ \|W_{\tau^h_{k-1}+t}-W_{\tau^h_{k-1}}\|>h \}.$$
Let $\bold{B}=(B_1,B_2,...) \in O(n)^{\infty}$ be a sequence of (deterministic) rotations in $O(n)$ and define
$$B^{\bold{B},h}_t=\sum_{k \in \mathbb{N}}B_k I_{(\tau^h_{k-1},\tau^h_k]}(t).$$
Since $\tau^h_k$ are predictable stopping times, the process $B^{\bold{B},h}_t$ is predictable, and $\Lambda^{\bold{B},h}:=\Lambda^{B^{\bold{B},h}}$ is invertible with inverse $\Lambda^{\bold{B}^{-1},h}$ defined by the random rotation
$$\tilde{B}^{\bold{B},h}_t=\sum_{k \in \mathbb{N}}B^{-1}_k I_{(\tau'^h_{k-1},\tau'^h_k]}(t) $$
where $\tau'^h_{k}$ are the stopping times defined on $\Omega'$ with the same definition of $\tau^h_k$ but using the transformed Brownian motion $W'$ instead of $W$.\\
In order to prove that $\Lambda^{\bold{B}^{-1},h}$, defined above, is actually the inverse of $\Lambda^{\bold{B},h}$ we exploit the fact that the map $\Lambda^{B'}$, defined by the random rotation $B'$, is the inverse of the map $\Lambda^{\bold{B},h}$ if and only if
\begin{equation}\label{equation_2}
\left(\int_0^t{B'^i_{j,s} dZ'^j_s } \right) \circ \Lambda^{\bold{B},h}=Z^i_t
\end{equation}
$\mathbb{P}$ almost surely. In particular $\Lambda^{\bold{B}^{-1},h}$ is the inverse of $\Lambda^{\bold{B},h}$ if and only if the relation \refeqn{equation_2} holds with $B'_s=\tilde{B}^{\bold{B},h}_t$.
If we are able to prove that $\tau'^h_k \circ \Lambda^{\bold{B},h}=\tau^h_k$, using the fact that
\begin{eqnarray*}
Z'_t \circ \Lambda^{\bold{B},h}&=&\int_0^t{B^{\bold{B},h}_{s} \cdot dZ_s}\\
&=& \sum_{k \in \mathbb{N}}B_{k} \cdot (Z_{\tau^i_k \wedge t}- Z_{\tau^i_{k-1} \wedge t}),
\end{eqnarray*}
we obtain
\begin{eqnarray*}
\left(\int_0^t{\tilde{B}^{\bold{B},h}_s \cdot dZ'_s } \right) \circ \Lambda^{\bold{B},h}&=&
\sum_{k \in \mathbb{N}} B^{-1}_k \cdot (Z'_{(\tau'^h_k \circ \Lambda^{\bold{B},h}) \wedge t
}-Z'_{(\tau'^h_{k-1} \circ \Lambda^{\bold{B},h}) \wedge t})\\
&=&\sum_{k \in \mathbb{N}} B^{-1}_k \cdot B_k \cdot (Z_{\tau^h_k \wedge t
}-Z_{\tau^h_{k-1} \wedge t})=Z_{t},
\end{eqnarray*}
proving in this way that equation \refeqn{equation_2} holds and thus that $\Lambda^{\bold{B}^{-1},h}$ is the inverse of $\Lambda^{\bold{B},h}$.\\
We now prove that $\tau'^h_k \circ \Lambda^{\bold{B},h}=\tau^h_k$. Using the fact that
$$W'_t \circ \Lambda^{\bold{B},h}=\int_0^t{B^{\bold{B},h}_{s} \cdot dW_s}=
\sum_{k \in \mathbb{N}}B_{k} \cdot  (W_{\tau^h_k \wedge t}-W_{\tau^h_{k-1} \wedge t}),$$
we have that $\tau'^h_{1} \circ \Lambda^{\bold{B},h} \leq \tau^h_1$ since
$$\|(W' \circ \Lambda^{\bold{B},h})_{\tau^h_1} \| =\| B_1 \cdot W_{\tau^h_1}\|=\| W_{\tau^h_1} \|=h.$$
Using this result we have
$$\tau'^h_1 \circ \Lambda^{\bold{B},h}=\inf \{t \leq \tau^1_h | \ \  \|W'_t \circ \Lambda^{\bold{B},h}\| \geq h \}=\inf \{t \leq \tau_1^h | \ \  \|B_1 \cdot W_t\| \geq h \}=\inf \{t \leq \tau^1_h | \ \  \|W_t\| \geq h \}=\tau^h_1,$$
and, with analogous reasoning we can prove that $\tau'^h_k \circ \Lambda^{\bold{B},h}=\tau^h_k$. Therefore $\Lambda^{\bold{B}^{-1},h}$ is the inverse of $\Lambda^{\bold{B},h}$ and the stopping times $\tau^h_k$ are invariant with respect to $\Lambda^{\bold{B},h}$ for any $\bold{B} \in O(n)^{\infty}$.\\
In order to prove the lemma we introduce a $\sigma$-algebra $\mathcal{G}^h \subset \mathcal{F}^Z$ generated by
$$\mathcal{G}^h=\bigvee_{k \in \mathbb{N}} \sigma\left(W_{\tau^h_k}\right)=\bigvee_{k \in \mathbb{N}}\sigma\left(W_{\tau^h_k}-W_{\tau^h_{k-1}} \right).$$
Using the explicit expression of $B^{\bold{B},h}$ and the invariance properties of $\tau^h_k$ we have
$$\Lambda^{\bold{B},h}_*(W^i_{\tau_k}-W^i_{\tau_{k-1}})=B^{-1,i}_{k,j}(W^j_{\tau'^h_k}-W^j_{\tau'^{h}_{k-1}}).$$
Since $B_i$ are invertible matrices we have that $\Lambda^{\bold{B},h}(\mathcal{G}^h)=\mathcal{G}'^h$ where
$\mathcal{G}'^h=\bigvee_{k \in \mathbb{N}}\sigma\left(W'_{\tau'^h_k}\right)$.\\
Given a bounded continuous function $f:\mathbb{R} \rightarrow \mathbb{R}$, we define
$$K^{f,h}=\mathbb{E}[f(K)|\mathcal{G}^h]=\mathbb{K}^{f,h}(\Delta^hW_1,\Delta^hW_2,...)$$
where $\mathbb{K}:\mathbb{R}^{\infty} \rightarrow \mathbb{R}$ is a measurable map and $\Delta^hW_i=W_{\tau^h_i}-W_{\tau^h_{i-1}}$. By the
explicit expression of the inverse of $\Lambda^{\bold{B},h}$ and the invariance property of $\tau^h_k$ we have that
$$\Lambda^{\bold{B},h}(K^{f,h})=\mathbb{K}^{f,h}(B^{-1}_1 \cdot \Delta^h W'_1, B^{-1}_2 \cdot \Delta^h W'_2,... ). $$
On the other hand
\begin{eqnarray*}
\Lambda^{\bold{B},h}_*(K^{f,h})&=&\Lambda^{\bold{B},h}_*(\mathbb{E}_{\mathbb{P}}[f(K)|\mathcal{G}^h])\\
&=&\mathbb{E}_{\Lambda^{\bold{B},h}_*(\mathbb{P})}[\Lambda^{\bold{B},h}_*(f(K))|\Lambda^{\bold{B},h}(\mathcal{G}^h)].
\end{eqnarray*}
If $Z$ is invariant with respect to random rotations and thus $\Lambda^{\bold{B},h}(\mathbb{P})=\mathbb{P}$, and $K$ satisfies the hypotheses of the lemma we have
$$\Lambda^{\bold{B},h}_*(K^{f,h})=\mathbb{E}_{\mathbb{P}}[f(K)|\mathcal{G}'^h]=\mathbb{K}^{f,h}(\Delta^hW'_1,\Delta^hW'_2,...).$$
This means that, for any $\bold{B} \in O(n)^{\infty}$,
$$\mathbb{K}^{f,h}(\Delta^hW'_1,\Delta^hW'_2,...)=\mathbb{K}^{f,h}(B^{-1}_1 \cdot \Delta^h W'_1, B^{-1}_2 \cdot \Delta^h W'_2,... ).$$
Since the previous  equality holds for any $\bold{B} \in O(n)^{\infty}$ the random variable $K^{f,h}$ depends only on the random variables
$\|\Delta^h W'_k\|$. On the other hand, by definition of $\tau'^h_k$ we have $\|\Delta^h W'_k\|=h $, and thus the random variable $K^{f,h}$
depends only on the deterministic parameter $h$, i.e. it is a constant. Therefore the random variable $f(K)$ is independent in mean of the
$\sigma$-algebra $\mathcal{G}^h$, but since $f$ is any continuous function we have that $K$ is independent
of $\mathcal{G}^h$. \\
The last step is to prove that we can approximate the Brownian motion $W$ using $\mathcal{G}^h$ measurable random variables with respect
to almost surely convergence. This is sufficient for proving the lemma. Indeed, suppose that $g:\mathbb{R}^k \rightarrow \mathbb{R}$ is a
 continuous bounded function, and let $f$ be as above; if $W^{h_n}_{r}$ is a suitable sequence of $\mathcal{G}^{h_n}$ measurable random variables such that $W^{h_n}_{r} \rightarrow W_{t_r}$ almost surely, then
\begin{eqnarray*}
\mathbb{E}[f(K)g(W_{t_1},...,W_{t_r})]&=&\mathbb{E}[\lim_{n \rightarrow + \infty}f(K)g(W^{h_n}_{1},...,W^{h_n}_{r})]\\
&=&\lim_{n \rightarrow + \infty}\mathbb{E}[f(K)g(W^{h_n}_{1},...,W^{h_n}_{r})]=\mathbb{E}[f(K)]\lim_{n \rightarrow + \infty}\mathbb{E}[g(W^{h_n}_{1},...,W^{h_n}_{r})]\\
&=&\mathbb{E}[f(K)]\mathbb{E}[\lim_{n \rightarrow + \infty}g(W^{h_n}_{1},...,W^{h_n}_{r})]=\mathbb{E}[f(K)]\mathbb{E}[g(W_{t_1},...,W_{t_r})],
\end{eqnarray*}
and this ensures that $K$ is independent of the Brownian motion $W$.\\
In the following we prove that $W_1$ can be approximated by $\mathcal{G}^h$ measurable random variables. The general case is a simple extension. First of all we note that $\tau^h_{k}-\tau^h_{k-1}$ form a sequence of independent identically distributed random variables, since they depend all in the same way on the increments $W_{\tau^h_{k-1}+t}-W_{\tau^h_{k-1}}$ which are independent and identically distributed. Furthermore by the rescaling property of Brownian motion, we have that the law of $\tau^h_{k}-\tau^h_{k-1}$ coincides with the law of $h^2 \tau^1_1$. On the other hand (see \cite{Takeuchi1985}) $\tau^1_1$ is an $L^2$ random variable with mean $\mu^n=\mathbb{E}[\tau^1_1]=\frac{1}{n}$ and variance
$\sigma^n=\var(\tau^1_1)=\frac{2}{n^2(n+2)}$. Let $k^h=\left\lfloor \frac{1}{\mu^n h^2} \right\rfloor \in \mathbb{N}$ be the maximum integer less then $\frac{1}{\mu^n h^2}$. Thus we have
\begin{eqnarray*}
\mathbb{E}[\tau^h_{k^h}]&=&\sum_{i=1}^{k^h}\mathbb{E}[\tau^h_{i}-\tau^h_{i-1}]=\mu^n h^2 \left\lfloor \frac{1}{\mu^n h^2} \right\rfloor \stackrel{h \rightarrow 0}{\longrightarrow} 1\\
\var(\tau^h_{k^h})&=&\sum_{i=1}^{k^h}\var(\tau^h_i-\tau^h_{i-1})= \sigma^n h^4 \left\lfloor \frac{1}{\mu^n h^2} \right\rfloor \stackrel{h \rightarrow 0}{\longrightarrow} 0.
\end{eqnarray*}
Since $\tau^h_{k^h}$ converges in probability to $1$ as $h \rightarrow 0$, choosing a suitable subsequence $h_k \rightarrow 0$ we have that $\tau^h_{k^h}$ converges to $1$ almost surely. Since the Brownian motion is almost surely continuous we have that $W_{\tau^{h_n}_{k^{h_n}}} \rightarrow W_1$ almost surely. So $W_1$ can be approximated by an almost surely convergent sequence of $\mathcal{G}^h$ random variables and the thesis is proved.
${}\hfill$\end{proof}\\

\noindent \begin{proof}[Proof of Theorem \ref{theorem_main}]
By Lemma \ref{lemma_main1} the matrix $A$ of quadratic covariation of $Z$ is of the form $A^{ij}_t=F_t \delta^{ij}$, where $F_t=\int_0^t{f^2_sds}$ for some predictable random process $f_t$. If $B_t$ is any predictable process taking values in $O(n)$ and such that $\Lambda^B$ is invertible, the quadratic variation matrix $A'$ of $Z'$ is
$$A'^{ij}_t=\int_0^t{\Lambda^B_*(B^{i}_{k,s}B^j_{\ell,s}\delta^{k\ell}f^2_s)ds}=\Lambda^B_*(F_t)\delta^{ij}.$$
By Theorem \ref{theorem_preliminary} we have
$$F_t(\omega')=\Lambda^B_*(F_t)(\omega'),$$
therefore the random process $F$ is invariant with respect to the action of $\Lambda^B$ for $B$ as above. If we fix some times $t_1,...,t_k$ and a continuous function $g:\mathbb{R}^k \rightarrow \mathbb{R}$, we can apply Lemma \ref{lemma_main2} to the random variable $g(F_{t_1},...,F_{t_k})$ proving that it is independent of the Brownian motion $W$. Since $k \in \mathbb{N}$, $t_i \in \mathbb{R}_+$ and the continuous function $g$ are arbitrary we have proved that the process $F$ is independent of the Brownian motion $W$. This implies that the process $f$ is independent of the Brownian motion $W$. Since, by the construction of $W$, $Z^i_t=\int_0^t{f_sdW^i_s}$, the theorem is proved.
${}\hfill$\end{proof}

\section*{Acknowledgements}

The author would like to thank Prof. Paola Morando and Prof. Stefania Ugolini for their  useful comments, suggestions and corrections of the first draft of the paper. This work was supported by Gruppo Nazionale Fisica Matematica (GNFM-INdAM) through the grant: \virgolette{Progetto Giovani, Symmetries and reduction for differential equations: from the deterministic to the stochastic case}.

\bibliographystyle{plain}
\bibliography{rotation(1)}

\end{document}